\def\sign{\operatorname{sgn}}
\newcommand{\eps}{\varepsilon}
\newcommand{\bR}{\mathbb{R}}
\newcommand{\bE}{\mathbb{E}}
\newcommand{\ttau}{ \tau t^{\frac 1 {\gamma} }  }
\newtheorem{theorem}{Theorem}[section]
\newtheorem{lemma}[theorem]{Lemma}
\newtheorem{prop}[theorem]{Proposition}
\newtheorem{remark}[theorem]{Remark}
\numberwithin{equation}{section}
\begin{document}
\title[Poincar\'e and fractional laplacian on Hamming]{Poincar\'e type  and spectral gap inequalities with fractional Laplacians on Hamming cube}
\author[Dong Li, Alexander Volberg]{Dong Li, Alexander Volberg}
\thanks{ Volberg is partially supported by the NSF DMS-1600065.  }
\address{Department of Mathematics, Hong Kong University of Science and Technology}
\email{madli@ust.hk \textrm{(Dong Li)}}


\address{Department of Mathematics, Michigan State University, East Lansing, MI 48823, USA}
\email{volberg@math.msu.edu \textrm{(A.\ Volberg)}}
\makeatletter
\@namedef{subjclassname@2010}{
  \textup{2010} Mathematics Subject Classification}
\makeatother
\subjclass[2010]{42B20, 42B35, 47A30}
\keywords{}
\begin{abstract} 
We prove here some dimension free  Poincar\'e-type inequalities on Hamming cube for functions with different spectral properties and for fractional Laplacians.
In this note the main attention is paid to estimates in $L^1$ norm on Hamming cube.
We build the examples  showing that our assumptions on spectral properties of functions cannot be dropped in general.
\end{abstract}
\maketitle

\section{Introduction}
Let $C_n:=\{-1,1\}^n$ denote  the Hamming cube, and let $x_i, i=1, \dots n$, be its coordinate functions assuming the values $\pm 1$.
If $S$ denotes a subset of $\{1,\dots, n\}$, then a monomial $x^S$ is just $x_{i_1}\cdot\dots x_{i_k}$, where $S=\{i_1, \dots, i_k\}$. If $S$ is the empty set, then we set $x^{\varnothing}=1$. 
There is a standard measure $\mu$ on $C_n$ (all points are charged by $2^{-n}$) and standard expectation with respect to this measure, it will be called $\bE$. For any $f$: $C_n \to \mathbb R$,
one can develop the expansion
\begin{align*}
f(x) = \sum_{S} a_S x^S=a_{\varnothing}+ \sum_{S\ne \varnothing} a_S x^S, 
\end{align*}
where $a_S=  \bE(f(x) x^S)=\hat f(S)$ is usually called the Fourier coefficient of $f$. The $L^2$ isometry takes
the form
\begin{align*}
\bE |f|^2 = \sum_{S} |a_S|^2.
\end{align*}

For $j\in \{1,\cdots, n\}$, define  $\nabla_j$ as
\begin{align*}
(\nabla_j f)(x)= 
\frac{ f(x_1,\cdots, x_j=1,\cdots, x_n)- f(x_1,\cdots, x_j=-1,\cdots, x_n)} 2.
\end{align*}
Then the adjoint operator $\nabla_j^{\ast}$ has the form
\begin{align*}
\nabla_j^{\ast} f = x_j \bE_j f = x_j \cdot \frac{f(x_1,\cdots, x_j=1, \cdots, x_n)
+f(x_1,\cdots, x_j=-1,\cdots, x_n)} 2.
\end{align*}
One can then introduce the Laplacian $\Delta= -\sum_{j=1}^n \nabla_j^{\ast} \nabla_j=-\sum_{j=1}^n x_j \nabla_j$. Clearly 
\begin{align*}
-\bE f \Delta g = \bE ( \sum_{j=1}^n \nabla_j f \nabla_j g).
\end{align*}

On monomials the Laplacian  acts by the rule
$$
\Delta ( x^S) = -|S| x^S,
$$
where $|S|$ denotes the cardinality of $S$, 
and thus semigroup  $e^{t\Delta}$ acts by the rule
$$
e^{t\Delta} f=a_{\varnothing}+ \sum_{S\ne \varnothing} a_S e^{-t|S|} x^S
$$
for $f= \sum_{S} a_S  x^S$.
The length of the gradient of $f$, $|\nabla f|(x)$ is defined as
$$
|\nabla f|^2(x) =\sum_{y\sim x} \left(\frac{f(x)-f(y)}{2}\right)^2,
$$
where $y\sim x$ denotes all neighbours of $x$. A point $y\in C_n$ is called a neighbour
of $x$, denoted as $y\sim x$,  if for some $i_0 \in \{1,\cdots, n\}$, we have $y_i=x_i$ for all
$i\ne i_0$, and $y_{i_0}=-x_{i_0}$.
It is easy to see that
\begin{align*}
|\nabla f|^2(x) = \sum_{j=1}^n |\nabla_j f(x)|^2,
\end{align*}
and consequently
$$
-\bE \big( f\cdot\Delta f\big) = \bE|\nabla f|^2=\sum_{S} |S| |a_S|^2\,.
$$

Then clearly, via the $L^2$ isometry mentioned earlier, 
\begin{equation}
\label{P2}
\bE |f-\bE f|^2 \le \bE|\nabla f|^2= -\bE \big( f\cdot\Delta f\big),
\end{equation}
and
\begin{equation}
\label{SpG2}
 \|e^{t\Delta} (f-\bE f)\|_2 \le e^{-t} \|f-\bE f\|_2\,.
\end{equation}

The first one is the Poincar\'e inequality in $L^2(\mu)$, the second one can be called {\it the spectral gap inequality} in $L^2(\mu)$.

\medskip

Below we are interested in such {\it dimension free} inequalities, where $L^2$ is replaced by $L^p$, especially for $p=1$ and when $\Delta$ is replaced by $\Delta_\gamma, 0<\gamma <1$,
where we define fractional Laplacian by
$$
\Delta_\gamma := - (-\Delta)^\gamma\,.
$$

\medskip

Such inequalities were studied in many situations, for us the starting point was \cite{Dong1}. The analogs on 
Hamming cube have some interesting properties and raise questions--especially about the sharp constants. 
But we do not address here the problem of sharp constants.  We wish to mention that certain estimates for fractional 
Laplacian on Hamming cube were considered in \cite{BELP}. 
Our estimates are different, but in conjunction with the estimates of \cite{BELP}, they naturally generate another set of questions which we plan to address elsewhere.

\medskip

In dealing with spectral gap estimates for  $e^{t\Delta_\gamma}$, $1<p<\infty$, we are led to the same estimate as \eqref{SpG2} with the only difference that
$e^{-t}$ gets substituted by $e^{-c_{p, \gamma} t}, c_{p, \gamma}>0$. We do not calculate $c_{p, \gamma}$ very precisely, but it is readily seen that it blows down to zero
if $p\to 1$. Moreover, we show that the inequality
\begin{equation}
\label{SpG1}
 \|e^{t\Delta_\gamma} (f-\bE f)\|_1 \le e^{-c_{1, \gamma}t} \|f-\bE f\|_1
\end{equation}
cannot generally hold with $c_{1, \gamma} >0$.

\medskip

For $\gamma=1$ this effect was carefully researched in \cite{HMO}, where for $p>1$ the constant $c_{p,1}$ is considered  in the following heat smoothing (or spectral gap)
inequality:
\begin{equation}
\label{SpGp}
 \|e^{t\Delta} (f-\bE f)\|_p \le e^{-c_{p, 1}t} \|f-\bE f\|_p\,,
\end{equation}
and it is shown that it blows down to zero when $p\to 1$. Moreover, this constant in calculated. 
\medskip

To have \eqref{SpG1} one needs something like extra assumption on the spectral properties of $f$. In the spirit of \cite{Dong1} we call $f$  {\it band limited} (or with {\it band spectrum})
if  in the Fourier decomposition $f= \sum_{S} a_S  x^S$ of $f$ one has all $a_S$ zero unless the length $|S|$ belongs to a finite set
(say, set $\{1,2, 3\}$).

\medskip

\begin{remark}
\label{band_countrex}
For band limited $f$ we prove estimate \eqref{SpG1}, but only if $\gamma <1$ ! For $\gamma =1$ there is a counterexample (see Section \ref{counterex})  to \eqref{SpG1} even for  $f$ with band spectrum.
  \end{remark}
  
  It  goes without saying that we need all constants met below to be independent of the dimension $n$ of cube $C_n$. 

\medskip

 Our spectral gap estimates are
the combination of Poincar\'e type estimates in various $L^p(\mu)$ (especially for $p=1$), hypercontractivity, and some standard convexity arguments.
The Poincar\'e estimate at $p=1$, $0<\gamma<1$, obtained below seems to be unusual. And  even Poincar\'e inequalities for $p>1$ seem to be different from the standard ones.
The next section is devoted to them.

\section{Poincar\'e-type inequalities with Laplacian}
\label{Poincare}

\begin{lemma}
Let $0<\beta \le 2$.  Let $(\Omega, d\mu)$ be a probability space.
Then for any random variable $g:\, \Omega \to \mathbb R$ with 
$\bE |g|^2<\infty$, we have
\begin{align*}
\bE |g-\bE g|^2 \ge c_1 \bE  |g|^2- 2^{\frac 1 {\beta}} \cdot | \bE \big[|g|^{\beta} \operatorname{sgn}(g) \big]|^{\frac 2 {\beta}},
\end{align*}
where $c_1>0$ is an absolute constant.
\end{lemma}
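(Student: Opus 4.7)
The plan is to recast the inequality via the variance identity $\bE|g-\bE g|^2=\bE|g|^2-(\bE g)^2$ into the equivalent form
\[
(\bE g)^2\;\le\;(1-c_1)\,\bE|g|^2\;+\;2^{1/\beta}\,|m|^{2/\beta},\qquad m:=\bE\big[|g|^\beta\operatorname{sgn}(g)\big],
\]
and to prove this by a case split according to whether $(\bE g)^2$ is small or large relative to $\bE|g|^2$.

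\emph{Case 1 (variance-dominated).} If $(\bE g)^2\le (1-c_1)\bE|g|^2$, the claim is immediate; the $|m|^{2/\beta}$ term is not even needed.

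\emph{Case 2 (concentration).} Otherwise $\bE|g-\bE g|^2=\bE|g|^2-(\bE g)^2<\frac{c_1}{1-c_1}(\bE g)^2$. WLOG $\bE g>0$. By Chebyshev's inequality,
\[
\bP\big(g\le\tfrac12\bE g\big)\;\le\;\frac{\bE|g-\bE g|^2}{(\bE g/2)^2}\;\le\;\frac{4c_1}{1-c_1},
\]
which is much less than $1$ for $c_1$ small. On the high-probability event $\{g\ge\frac12\bE g\}$, $|g|^\beta\operatorname{sgn}(g)=g^\beta\ge (\bE g/2)^\beta$, so the positive part of $m$ satisfies $\bE[|g|^\beta\mathbf{1}_{g>0}]\gtrsim(\bE g)^\beta$. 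The negative part is controlled by H\"older's inequality with exponents $2/\beta$ and $2/(2-\beta)$:
\[
\bE\big[|g|^\beta\mathbf{1}_{g<0}\big]\;\le\;(\bE|g|^2)^{\beta/2}\,\bP(g<0)^{(2-\beta)/2},
\]
which is small since $\bP(g<0)\le 4c_1/(1-c_1)$ is small and $\bE|g|^2\le(\bE g)^2/(1-c_1)\asymp(\bE g)^2$. Combining, $m\gtrsim(\bE g)^\beta$, whence $|m|^{2/\beta}\gtrsim(\bE g)^2$ with an explicit constant. The factor $2^{1/\beta}$ in the statement is then the right normalization emerging from taking the $\beta$-th power of $\bE g/2$ and raising the outcome to the exponent $2/\beta$.

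The main obstacle is the interplay between the H\"older exponent $(2-\beta)/2$ on $\bP(g<0)$ (which vanishes as $\beta\to 2$, erasing the gain from concentration) and the smallness of $\bP(g<0)$ itself. At the endpoint $\beta=2$, a direct argument instead, based on the sharper Cauchy--Schwarz bound
\[
(\bE g)^2\;\le\;\bE(g^+)^2\,\bP(g>0)+\bE(g^-)^2\,\bP(g<0)\;\le\;\tfrac12\bE|g|^2+\tfrac12|m|,
\]
furnishes the sharp constant $c_1=\tfrac12$. Interpolating between this endpoint and the concentration argument of Case 2 yields a strictly positive absolute $c_1$ uniformly over $\beta\in(0,2]$.
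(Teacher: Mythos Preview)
Your overall framework coincides with the paper's: normalize, split into the variance-dominated case and the concentration case, and in the latter use Chebyshev to show $g$ is close to $\bE g$ and hence $m$ is large. The difference is in how the ``bad'' contribution to $m$ is controlled, and this is where your argument has a genuine gap.

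In Case~2 you bound the negative part by H\"older,
\[
\bE\bigl[|g|^{\beta}\mathbf 1_{g<0}\bigr]\le(\bE|g|^{2})^{\beta/2}\,\bP(g<0)^{(2-\beta)/2},
\]
and note that the exponent $(2-\beta)/2$ vanishes as $\beta\to 2$, so the smallness of $\bP(g<0)$ is lost. You then propose to ``interpolate'' with the separate $\beta=2$ argument. But there is no interpolation principle that applies here: you have two different proofs for two different regimes, and neither degrades gracefully into the other. Concretely, for $\beta$ slightly below $2$ your H\"older bound gives a negative contribution comparable to $(\bE g)^{\beta}$, which can swamp the positive contribution $(\bE g/2)^{\beta}$ regardless of how small $c_1$ is. So as written the constant $c_1$ you obtain depends on $\beta$ and blows down as $\beta\uparrow 2$.

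The paper avoids this entirely by bounding the off-concentration part pointwise and uniformly in $\beta\le 2$: with $\eta=c_1^{1/10}$ one writes
\[
\int_{|g-1|>\eta}|g|^{\beta}\,d\mu\;\le\;4\int_{|g-1|>\eta}\bigl(|g-1|^{\beta}+1\bigr)\,d\mu\;\le\;4\int|g-1|^{2}\,d\mu+8\,\bP(|g-1|>\eta),
\]
using $|g-1|^{\beta}\le 1+|g-1|^{2}$. Both terms on the right are $O(c_1^{4/5})$ by Chebyshev, uniformly in $\beta$. A simple fix in your own setup would be to replace the H\"older step by the observation that on $\{g<0\}$ one has $|g|\le|g-\bE g|$, whence
\[
\bE\bigl[|g|^{\beta}\mathbf 1_{g<0}\bigr]\le\bE|g-\bE g|^{\beta}\le\bigl(\bE|g-\bE g|^{2}\bigr)^{\beta/2},
\]
which is small uniformly in $\beta\in(0,2]$; with this change your Case~2 goes through without any appeal to the $\beta=2$ endpoint.
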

\begin{proof}
Without the loss of generality we assume $\bE |g|^2 =1$.  Let $c_1>0$ be a sufficiently small absolute constant. If
$\bE | g -\bE g |^2 \ge c_1$ we are done. Now assume $\bE |g - \bE g|^2 < c_1 \ll 1$.  Together with
the condition $\bE |g|^2 =1$ we infer that $0\le 1- |\bE g|\ll 1$. Replacing $g$ by $-g$ if necessary we 
may assume $| 1-\bE g | \ll 1$.  Let $\eta= c_1^{\frac 1{10}}$. Then for $c_1$ sufficiently
small (below the smallness of $c_1$ is independent of $\beta$ since $0<\beta\le 2$), we have
\begin{align*}
\int |g|^{\beta} \operatorname{sgn}(g) d\mu
&\ge  \int_{|g-1| \le \eta} |g|^{\beta} \operatorname{sgn}(g) d\mu -
\int_{|g-1|>\eta} |g|^{\beta} d \mu \notag \\
& \ge \sqrt{\frac 34} - \int_{|g-1|>\eta} 4\cdot (|g-1|^{\beta} +1) d\mu \notag \\
&\ge \sqrt{\frac 34} - 4 \int |g-1|^2 d\mu - 8 \int_{|g-1|>\eta} d\mu \notag \\
& \ge  \frac 1 {\sqrt 2}.
\end{align*}
  The desired inequality then obviously follows.
\end{proof}

\begin{prop} \label{prop_ng1}
Let $0<\beta\le 2$. Then for any $g:\,  \{-1, 1\}^n \to \mathbb R$, we have
\begin{align*}
\bE  |\nabla g |^2 \ge c_1 \bE |g |^2 - 2^{\frac 1 {\beta}} | \bE ( |g|^{\beta} \operatorname{sgn}(g) ) |^{\frac 2 {\beta}},
\end{align*}
where $c_1>0$ is an absolute constant.
\end{prop}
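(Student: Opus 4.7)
The plan is to combine the preceding lemma (applied on the probability space $(\Omega,\mu) = (C_n, \mu)$) with the classical $L^2$ Poincaré inequality \eqref{P2} on the Hamming cube. Both ingredients are already at our disposal: the lemma gives a lower bound for $\bE|g-\bE g|^2$ in terms of $\bE|g|^2$ and a correction involving the signed $\beta$-moment, while \eqref{P2} tells us that $\bE|\nabla g|^2$ dominates $\bE|g-\bE g|^2$.

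Specifically, I would first apply the lemma to $g:C_n\to\bR$ viewed as a random variable on the probability space $(C_n,\mu)$. This is legitimate because the lemma is stated for arbitrary probability spaces and only requires $\bE|g|^2<\infty$, which is automatic on the finite set $C_n$. The lemma then yields
\begin{equation*}
\bE|g-\bE g|^2 \ge c_1 \bE|g|^2 - 2^{1/\beta} \bigl|\bE\bigl[|g|^\beta \sgn(g)\bigr]\bigr|^{2/\beta}.
\end{equation*}

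Next, I would invoke \eqref{P2} to replace the left-hand side by the (larger) Dirichlet energy $\bE|\nabla g|^2$. Chaining the two inequalities gives
\begin{equation*}
\bE|\nabla g|^2 \ge \bE|g-\bE g|^2 \ge c_1 \bE|g|^2 - 2^{1/\beta} \bigl|\bE\bigl[|g|^\beta \sgn(g)\bigr]\bigr|^{2/\beta},
\end{equation*}
which is exactly the desired inequality with the same absolute constant $c_1$ as in the lemma.

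There is no real obstacle here: the proposition is essentially a translation of the abstract $L^2$ statement from the lemma into the concrete setting of the Hamming cube, using the well-known $L^2$ Poincaré inequality. The only thing to verify is that the constant $c_1$ can be taken to be the same absolute constant (independent of $n$ and $\beta$), which is immediate since both the lemma and \eqref{P2} provide dimension-free constants.
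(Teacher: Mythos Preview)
Your proposal is correct and matches the paper's own proof essentially verbatim: the paper also simply chains the $L^2$ Poincar\'e inequality $\bE|\nabla g|^2 \ge \bE|g-\bE g|^2$ with the preceding lemma.
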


\begin{proof}
This follows from the Poincar\'e inequality with $p=2$ on Hamming cube:
\begin{align*}
\bE  |\nabla g |^2 \ge \bE |g-\bE g|^2 
\end{align*}
and the previous lemma.
\end{proof}

Next is an elementary lemma.

\begin{lemma}
\label{abp}
Let $a, b\in \bR$, $p>1$. Then there exists $\tilde c_p>0$ such that
$$
(a-b) (|a|^{p-1} \sign a- |b|^{p-1}\sign b) \ge\tilde  c_p (|a|^{\frac{p}2}\sign a - |b|^{\frac{p}2}\sign b)^2\,.
$$
Moreover,
\begin{equation}
\label{cp}
\tilde c_p = \min_{0\le t\le 1}\frac{1-t^{\frac2p}}{1-t}\cdot \frac{1-t^{\frac{2}{p'}}}{1-t} \ge  2\min \big(\frac1p, \frac1{p'}\big)\,.
\end{equation}
\end{lemma}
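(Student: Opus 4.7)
The plan is to exploit the change of variables $u=a^{p/2}$, which turns the exponents $p-1$ and $p/2$ into the dual exponents $2/p$ and $2/p'$. The inequality is invariant under $(a,b)\mapsto (-a,-b)$ and under $(a,b)\mapsto(b,a)$, so I may assume $a\ge 0$ and $a\ge b$, leaving the two cases of equal and opposite signs.

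In the same-sign case $a\ge b\ge 0$, set $u=a^{p/2}$, $v=b^{p/2}$. Since $(p-1)/p=1/p'$, we have $a^{p-1}=u^{2/p'}$ and $b^{p-1}=v^{2/p'}$, so the inequality reads
$$
(u^{2/p}-v^{2/p})(u^{2/p'}-v^{2/p'})\;\ge\;\tilde c_p\,(u-v)^2.
$$
Factoring out $u^2$ on both sides and setting $t=v/u\in[0,1]$ leaves exactly the ratio $\frac{1-t^{2/p}}{1-t}\cdot\frac{1-t^{2/p'}}{1-t}$, so the sharp constant in this case is the minimum of this ratio, matching the stated formula for $\tilde c_p$. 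For the opposite-sign case $a>0>b$, the same substitution with $v=|b|^{p/2}$ turns the inequality into $(u^{2/p}+v^{2/p})(u^{2/p'}+v^{2/p'})\ge\tilde c_p\,(u+v)^2$; expanding, the diagonal terms give $u^2+v^2$ (using $2/p+2/p'=2$) and AM--GM gives $u^{2/p}v^{2/p'}+u^{2/p'}v^{2/p}\ge 2uv$, so the left side already dominates $(u+v)^2$. The inequality then holds provided $\tilde c_p\le 1$, which is automatic from the explicit formula: as $t\to 1^-$, the ratio tends to $(2/p)(2/p')=4/(pp')\le 1$, the final bound coming from $pp'=p+p'\ge 2\sqrt{pp'}$.

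To establish $\tilde c_p\ge 2\min(1/p,1/p')$, note that the formula is symmetric under $p\leftrightarrow p'$, so I may assume $p\ge 2$ and hence $2/p\le 1\le 2/p'$. Then $t^{2/p'}\le t$ on $[0,1]$ yields $1-t^{2/p'}\ge 1-t$, while concavity of $s\mapsto s^{2/p}$ on $[0,\infty)$ gives the tangent-line bound $1-t^{2/p}\ge (2/p)(1-t)$. Multiplying produces $\tilde c_p\ge 2/p=2\min(1/p,1/p')$. The only conceptual step is spotting the dual-exponent substitution $u=a^{p/2}$; once the inequality is in that form, both the explicit formula and its lower bound reduce to elementary convexity and AM--GM arguments.
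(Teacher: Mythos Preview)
Your proof is correct and follows essentially the same route as the paper: reduce by symmetry to the same-sign and opposite-sign cases, handle the opposite-sign case with AM--GM (yielding constant $1$), and in the same-sign case substitute to arrive at the ratio $\frac{1-t^{2/p}}{1-t}\cdot\frac{1-t^{2/p'}}{1-t}$. Your substitution $u=a^{p/2}$ is equivalent to the paper's two-step normalization $x=b/a$ followed by $x=t^{2/p}$. One small addition on your side: you explicitly prove the lower bound $\tilde c_p\ge 2\min(1/p,1/p')$ via the concavity/tangent-line argument, whereas the paper only records this bound in the statement without spelling out a proof.
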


\begin{proof}
Notice that by symmetry we can think that either $a, b$ are both positive or that $a>0>b$.
Then by homogeneity the case $a>0>b$ is reduced to the estimate
$$
(1+x)(1+ x^{p-1}) \ge (1+ x^{\frac{p}2})^2,\, x\ge 0\,,
$$
which is the same as $2 x^{\frac{p}2} \le x + x^{p-1}$. The latter inequality is just $2AB\le A^2 + B^2$.

The case when both $a, b$ are positive becomes
$$
(1-x)(1- x^{p-1}) \ge \tilde c_p(1- x^{\frac{p}2})^2,\, 0\le x\le 1\,.
$$
Notice that this inequality is false for $p=1$, but it holds for $p>1$. This is just because after the change of variable $x=t^{\frac{2}p}$ one can observe that
$$
\lim_{t\to 1-}\frac{1- t^{\frac{2}p}}{1-t} >0,\,\, 
\lim_{t\to 1-}\frac{1- t^{\frac{2}{p'}}}{1-t} >0\,.
$$
From this one sees immediately that
$$
\tilde c_p:= \inf_{0\le x\le 1}\frac{(1-x)(1- x^{p-1}) }{(1- x^{\frac{p}2})^2} >0\,.
$$

\end{proof}

\begin{theorem} 
\label{thm1.3aa}
Let $1<p<\infty$. Then for any $f:\, \{-1,1\}^n \to \mathbb R$, we have
\begin{align*}
- \bE (\Delta f |f|^{p-1} \operatorname{sgn}(f)  )\ge C_1 \cdot c_p \cdot \bE |f|^p- 2^{\frac p2}
 \cdot c_p \cdot |\bE f|^p,
\end{align*}
where $C_1>0$ is an absolute constant, and $c_p=2\min(\frac 1p, \frac 1 {p^{\prime}})$. 
\end{theorem}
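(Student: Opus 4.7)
The plan is to combine the two preparatory results (Lemma~\ref{abp} and Proposition~\ref{prop_ng1}) with the integration-by-parts identity for $\Delta$ on the Hamming cube, using the substitution $g = |f|^{p/2}\operatorname{sgn}(f)$.

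First I would rewrite the left-hand side using the adjoint structure of $\Delta = -\sum_j \nabla_j^{\ast}\nabla_j$: taking the ``test function'' to be $|f|^{p-1}\operatorname{sgn}(f)$ gives
\begin{align*}
-\bE\big(\Delta f \cdot |f|^{p-1}\operatorname{sgn}(f)\big)
= \bE\sum_{j=1}^n \nabla_j f \cdot \nabla_j\!\big(|f|^{p-1}\operatorname{sgn}(f)\big).
\end{align*}
Each summand on the right is an expectation of a product of two differences along the $j$-th edge, namely $\tfrac{1}{2}(f(x^+)-f(x^-))$ and $\tfrac{1}{2}(|f(x^+)|^{p-1}\operatorname{sgn} f(x^+) - |f(x^-)|^{p-1}\operatorname{sgn} f(x^-))$, where $x^\pm$ denotes the value with $x_j=\pm1$.

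Now I would apply Lemma~\ref{abp} pointwise to the pair $a=f(x^+)$, $b=f(x^-)$. This yields
\begin{align*}
\nabla_j f \cdot \nabla_j\!\big(|f|^{p-1}\operatorname{sgn}(f)\big) \;\ge\; \tilde c_p\,\big|\nabla_j g\big|^2,
\qquad g := |f|^{p/2}\operatorname{sgn}(f),
\end{align*}
so summing over $j$ and taking expectation gives $-\bE(\Delta f\cdot |f|^{p-1}\operatorname{sgn} f) \ge \tilde c_p\, \bE|\nabla g|^2$, with $\tilde c_p \ge c_p = 2\min(1/p,1/p')$ from \eqref{cp}.

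Next I would apply Proposition~\ref{prop_ng1} to $g$, with the choice $\beta = 2/p \in (0,2]$ (valid since $p>1$). The key observation is that this choice is tailored so that $|g|^2 = |f|^p$ and $|g|^\beta \operatorname{sgn}(g) = f$, hence $\bE(|g|^\beta \operatorname{sgn} g) = \bE f$ and $2/\beta = p$, $1/\beta = p/2$. The proposition then delivers
\begin{align*}
\bE|\nabla g|^2 \;\ge\; c_1\,\bE|f|^p - 2^{p/2}\,|\bE f|^p.
\end{align*}
Multiplying by $\tilde c_p$ gives $-\bE(\Delta f\cdot|f|^{p-1}\operatorname{sgn} f) \ge \tilde c_p c_1 \bE|f|^p - \tilde c_p 2^{p/2}|\bE f|^p$.

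Finally I would replace $\tilde c_p$ by $c_p$ on both sides. If $c_1 \bE|f|^p - 2^{p/2}|\bE f|^p \ge 0$, the replacement in the first term is valid since $\tilde c_p \ge c_p$. If that quantity is negative, then the target bound $c_p c_1 \bE|f|^p - 2^{p/2} c_p |\bE f|^p$ is itself negative, while the left-hand side is nonnegative (each summand $\nabla_j f \cdot \nabla_j(|f|^{p-1}\operatorname{sgn} f)$ is pointwise $\ge 0$), so the inequality is automatic. Setting $C_1 := c_1$ completes the argument. The only nontrivial step is recognizing the ``correct'' substitution $g = |f|^{p/2}\operatorname{sgn}(f)$ together with $\beta = 2/p$, which is precisely what makes the chain-rule estimate of Lemma~\ref{abp} dovetail with Proposition~\ref{prop_ng1}; everything else is bookkeeping.
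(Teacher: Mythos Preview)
Your proposal is correct and follows essentially the same route as the paper: integration by parts, the pointwise chain-rule estimate from Lemma~\ref{abp}, the substitution $g=|f|^{p/2}\operatorname{sgn}(f)$ with $\beta=2/p$, and then Proposition~\ref{prop_ng1}. The only cosmetic difference is that the paper replaces $\tilde c_p$ by $c_p$ \emph{before} invoking Proposition~\ref{prop_ng1} (which is legitimate since $\bE|\nabla g|^2\ge 0$), thereby avoiding your final case split; your detour through the sign of $c_1\bE|f|^p-2^{p/2}|\bE f|^p$ is correct but unnecessary.
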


\begin{proof}
By Lemma \ref{abp} (note that we need $p>1$), we have
\begin{align}
-\bE \Delta f |f|^{p-1} \operatorname{sgn} (f) 
&=\bE( \sum_{i=1}^n \nabla_i f \nabla_i (|f|^{p-1} \operatorname{sgn}(f) ) ) \notag \\
&\ge \bE( \sum_{i=1}^n \tilde c_p |\nabla_i( |f|^{\frac p2} \operatorname{sgn}(f) ) |^2 )
\ge c_p \| \nabla ( |f|^{\frac p2} \operatorname{sgn}(f) ) \|_2^2.
\notag 
\end{align}

Now we make a change of variable and denote $g(x) = |f(x)|^{\frac p2} \sign( f(x))$.
Note that $g$ and $f$ have the same sign. Clearly
\begin{align*}
\bE f=\bE \big[|g|^{\beta} \sign(g)\big],
\end{align*}
where $\beta= \frac 2p \in (0, 2)$ since $1< p<\infty$. The desired inequality then clearly
follows from Proposition \ref{prop_ng1}.
\end{proof}

\medskip

\section{Fractional Laplacian on Hamming cube and its spectral gap estimates}
\label{fractional}

 For $0<\gamma\le 1$, we introduce 
 $$\Delta_{\gamma}
=-(-\Delta)^{\gamma},
$$
the fractional Laplacian operator on Hamming cube via Fourier transform as
\begin{align*}
(\Delta_{\gamma} f) (x) = - \sum_{S} |S|^{\gamma} a_S x^S,
\end{align*}
for any $f=\sum_{S} a_S x^S$. In yet other words $\Delta_{\gamma}$ is simply the Fourier
multiplier $-|S|^{\gamma}$.

\medskip

The first claim of the next theorem is very well known for $p=2$. It is the claim that Laplacian on Hamming cube has a spectral gap. It is interesting that
this ``spectral gap" estimate can be extrapolated to $1<p<\infty$, and even, as we will see later, for $p=1$ sometimes.

\medskip

In Section \ref{band} we will see that with extra spectral assumptions on $f$ it holds even for $p=1$.

\begin{theorem}
\label{gap_pbigger1}
Let $1<p<\infty$. Then for any $f: \{-1,1\}^n \to \mathbb R$, we have
\begin{align*}
\| e^{t \Delta} (f -\bE f)\|_p \le e^{-k_1 t} \| f-\bE f \|_p,\quad \forall\, t>0,
\end{align*}
where $k_1=C_1 \cdot c_p$, $C_1>0$ is an absolute constant and 
$c_p = 2 \min(\frac 1p, \frac 1 {p^{\prime}})$.  Similarly for $\Delta_{\gamma}$, 
\begin{align*}
\| e^{t \Delta_{\gamma} } (f -\bE f)\|_p \le e^{-k_{\gamma} t} \| f-\bE f \|_p,\quad \forall\, t>0,
\end{align*}
where the constant $k_{\gamma} = k_1^{\gamma}$.
\end{theorem}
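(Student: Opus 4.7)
The plan is to derive the integer ($\gamma=1$) spectral gap inequality from Theorem \ref{thm1.3aa} by differentiating $\|u(t)\|_p^p$ along the heat flow, and then to obtain the fractional case via Bochner subordination from $\Delta$.

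For the integer case, set $u(t,x):=\bigl(e^{t\Delta}(f-\bE f)\bigr)(x)$. Since $\Delta$ annihilates constants ($\Delta x^\varnothing=0$), the semigroup preserves the mean, so $\bE u(t,\cdot)=\bE(f-\bE f)=0$ for every $t\ge 0$. I would then let $F(t):=\bE|u(t,\cdot)|^p$, use $\partial_t u=\Delta u$, and differentiate to get
$$F'(t)=p\,\bE\bigl(|u(t)|^{p-1}\operatorname{sgn}(u(t))\cdot\Delta u(t)\bigr).$$
Applying Theorem \ref{thm1.3aa} to $u(t,\cdot)$ and using $\bE u(t)=0$ to kill the $2^{p/2}c_p|\bE u|^p$ term yields $F'(t)\le -p\,C_1 c_p\,F(t)$. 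Gronwall then gives $F(t)\le e^{-pk_1 t}F(0)$ with $k_1=C_1 c_p$, which is exactly the first inequality.

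For the fractional Laplacian ($0<\gamma<1$) I would use Bochner subordination: there is a one-sided $\gamma$-stable probability density $\eta_{t,\gamma}$ on $(0,\infty)$ satisfying
$$\int_0^\infty e^{-s\lambda}\eta_{t,\gamma}(s)\,ds=e^{-t\lambda^\gamma},\qquad \lambda\ge 0.$$
Since $-\Delta$ is a nonnegative self-adjoint operator on $L^2(C_n,\mu)$ diagonalized by the monomials $\{x^S\}$, the functional calculus gives the pointwise identity $e^{t\Delta_\gamma}g=\int_0^\infty e^{s\Delta}g\,\eta_{t,\gamma}(s)\,ds$ for every $g:C_n\to\mathbb R$. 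Taking $g=f-\bE f$, Minkowski's integral inequality combined with the integer estimate produces
$$\|e^{t\Delta_\gamma}(f-\bE f)\|_p\le\Bigl(\int_0^\infty e^{-k_1 s}\eta_{t,\gamma}(s)\,ds\Bigr)\|f-\bE f\|_p=e^{-tk_1^\gamma}\|f-\bE f\|_p,$$
so $k_\gamma=k_1^\gamma$, as claimed.

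The only step that needs a word of care is the interchange of differentiation and expectation at $F'(t)$, but since $C_n$ is finite everything is a smooth vector-valued function of $t$, so differentiating under the expectation is automatic. The subordination formula and the positivity/probability-density nature of $\eta_{t,\gamma}$ are classical, so no genuine obstacle arises beyond the $\gamma=1$ step itself.
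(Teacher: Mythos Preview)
Your proposal is correct and follows essentially the same route as the paper: differentiate $\bE|e^{t\Delta}(f-\bE f)|^p$, invoke Theorem~\ref{thm1.3aa} with $\bE u(t)=0$ to obtain the differential inequality $F'(t)\le -pC_1c_pF(t)$, and then pass to $\Delta_\gamma$ by Bochner subordination and Minkowski. The only cosmetic difference is that the paper writes the subordination via a single probability measure $d\rho(\tau)$ rescaled by $t^{1/\gamma}$ (Lemma~\ref{lem_subor_1}) rather than a $t$-dependent density $\eta_{t,\gamma}$, but the computation of the Laplace transform at $\lambda=k_1$ is identical.
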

\begin{proof}
Without loss of generality we can assume $\bE f=0$. Denote $I(t)= \bE (|e^{t\Delta} f |^p)$.
Since $\mu$ is uniform counting measure, we can
directly differentiate 
 and this gives
\begin{align*}
\frac d {dt} I(t) &= p\bE (  \Delta g |g|^{p-1} \sign(g) ),
\end{align*}
where $g=e^{t\Delta} f$. Note that $\bE g=0$. Thus by
Theorem \ref{thm1.3aa}, we have
\begin{align*}
\frac d {dt } I(t) \le - p\cdot C_1 \cdot c_p I(t).
\end{align*}
Integrating in time then yields the desired inequality with $k_1= C_1 \cdot c_p $.  For the fractional Laplacian
case, we can use the subordination identity (see Lemma \ref{lem_subor_1})
\begin{align*}
e^{-\lambda^{\gamma}} = \int_0^{\infty} e^{-\tau \lambda} d \rho(\tau), \quad \lambda \ge 0.
\end{align*}
where $d\rho(\tau)$ is a probability measure on $[0,\infty)$.  Clearly then
\begin{align*}
e^{-t \lambda^{\gamma}} = \int_0^{\infty} e^{-\tau t^{\frac 1 {\gamma}} \lambda}
d\rho(\tau).
\end{align*}
It follows that
\begin{align*}
\| e^{t \Delta_{\gamma}} f \|_p &\le \int_0^{\infty} 
e^{-k_1 \tau t^{\frac 1{\gamma} } } d\rho(\tau) \| f \|_p \notag \\
& = e^{-k_2 t } \| f \|_p, \qquad k_2=k_1^{\gamma}.
\end{align*}
\end{proof}

\section{Band spectrum and $p=1$. The first proof}
\label{band}

We first prove a certain Poincar\'e-type inequality involving $\Delta_\gamma f, 0<\gamma<1$ in $L^1(\{-1, 1\}^n)$. It will work for functions with band spectrum.
Then we derive from it the inequality of ``spectrum gap type" for functions in $L^1(\{-1, 1\}^n)$ having  band spectrum. Namely, we get
\begin{theorem}
\label{spgap_L1_thm}
For every $\gamma\in (0,1)$ there exits $c_\gamma>0$ independent of $n$ such that for every $n$ and every $f\in L^1(\{-1, 1\}^n)$ with band spectrum (meaning that it has only, say, $1$-mode and $2$-mode only), or, more generally, finite number of modes and $\bE f=0$, we have
\begin{equation}
\label{spgap_L1}
\| e^{t\Delta_\gamma} f\|_1 \le e^{-c_\gamma t} \|f\|_1\,.
\end{equation}
\end{theorem}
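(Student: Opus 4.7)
The plan is a two-step bootstrap. First I would establish the $L^1$ Poincar\'e-type inequality
\begin{align*}
-\bE\big(\Delta_\gamma f\cdot\operatorname{sgn}(f)\big)\ge c_\gamma\,\|f\|_1
\end{align*}
for every band-limited, mean-zero $f$, with $c_\gamma>0$ depending on $\gamma$ and on the band but not on $n$. Granting this, \eqref{spgap_L1} follows by the same Gr\"onwall argument used in Theorem \ref{gap_pbigger1}: setting $g(t):=e^{t\Delta_\gamma}f$ and $I(t):=\|g(t)\|_1$, the multiplier $e^{t\Delta_\gamma}$ preserves both $\bE g(t)=0$ and the finite spectrum of $f$, so the Poincar\'e bound is applicable to $g(t)$ at every $t\ge 0$. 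On the finite cube $\{-1,1\}^n$ the map $t\mapsto g(t,x)$ is real analytic for each $x$, hence for a.e.\ $t$ every $g(t,x)\ne 0$ and
\begin{align*}
I'(t)=\bE\big(\operatorname{sgn}(g(t))\cdot\Delta_\gamma g(t)\big)\le -c_\gamma\,I(t),
\end{align*}
whence $I(t)\le e^{-c_\gamma t}I(0)$.

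The real content is the $L^1$ Poincar\'e bound, for which I would use the L\'evy--Khintchine identity, valid for $0<\gamma<1$,
\begin{align*}
\lambda^\gamma=\frac{\gamma}{\Gamma(1-\gamma)}\int_0^\infty(1-e^{-s\lambda})\,s^{-\gamma-1}\,ds,\qquad \lambda\ge 0,
\end{align*}
applied spectrally to $-\Delta$ to express $\Delta_\gamma$ as an $s^{-\gamma-1}ds$-average of the operators $e^{s\Delta}-I$. Pairing against $\operatorname{sgn}(f)$ and using $|\operatorname{sgn}(f)|\le 1$,
\begin{align*}
-\bE\big(\Delta_\gamma f\cdot\operatorname{sgn}(f)\big)\ge \frac{\gamma}{\Gamma(1-\gamma)}\int_0^\infty\big(\|f\|_1-\|e^{s\Delta}f\|_1\big)\,s^{-\gamma-1}\,ds.
\end{align*}
Here is where the band assumption enters: since $e^{s\Delta}$ is also a Fourier multiplier, $e^{s\Delta}f$ inherits the finite spectrum of $f$, and by Bonami--Beckner hypercontractivity every polynomial $h$ on $\{-1,1\}^n$ of degree at most $d$ obeys $\|h\|_2\le K_d\|h\|_1$ with $K_d$ independent of $n$. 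Combined with the elementary $L^2$ gap $\|e^{s\Delta}h\|_2\le e^{-s}\|h\|_2$ for mean-zero $h$, this yields the dimension-free comparison
\begin{align*}
\|e^{s\Delta}f\|_1\le\|e^{s\Delta}f\|_2\le e^{-s}\|f\|_2\le K_d\,e^{-s}\|f\|_1,
\end{align*}
so for $s\ge s_0:=\log(2K_d)$ the integrand is at least $\tfrac12\|f\|_1$. Restricting the subordination integral to $[s_0,\infty)$ then gives the Poincar\'e constant $c_\gamma\ge \frac{s_0^{-\gamma}}{2\Gamma(1-\gamma)}>0$.

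The main obstacle is precisely this hypercontractive passage from $L^2$ to $L^1$: without the band assumption there is no dimension-free mechanism forcing $\|e^{s\Delta}f\|_1$ to drop strictly below $\|f\|_1$ at any fixed $s$, and the subordination integral would only produce the trivial bound $\ge 0$. The restriction $\gamma<1$ enters solely through the L\'evy--Khintchine representation, whose integrand behaves like $s^{-\gamma}$ near the origin — integrable exactly when $\gamma<1$ — and this threshold is not artificial, in view of Remark \ref{band_countrex} promising an actual counterexample at $\gamma=1$.
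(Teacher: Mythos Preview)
Your proposal is correct and follows essentially the same route as the paper's first proof: the L\'evy--Khintchine representation of $-\Delta_\gamma$, pairing with $\operatorname{sgn}(f)$ to get $\int_0^\infty(\|f\|_1-\|e^{s\Delta}f\|_1)\,s^{-\gamma-1}ds$, then invoking Bonami hypercontractivity on the band-limited $f$ to force a uniform drop of $\|e^{s\Delta}f\|_1$ for $s$ large, and finally Gr\"onwall. The only cosmetic differences are that the paper restricts the subordination integral to a bounded window $[3k\log 3,\,6k\log 3]$ rather than $[s_0,\infty)$, and handles the zero set $\{F_t=0\}$ explicitly in the derivative of $I(t)$ (your analyticity argument is a clean alternative, provided you note that at points where $g(\cdot,x)\equiv 0$ the term contributes nothing to either side). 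The paper also supplies a genuinely different second proof via a direct modification of the kernel of $e^{t\Delta_\gamma}$, which yields the result in any shift-invariant norm, not just $L^1$.
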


\medskip

\begin{remark}
\label{Lorentz}
This result will be proved, in fact, by two different methods. The second method shows, in particular, that the $L^1$-norm can be changed to any shift invariant norm (as $\{-1,1\}^n$ is isomorphic to 
$\mathbb F_2^n$ and shift can be understood on this group). In particular, one gets the spectral gap theorem on any Lorentz or Orlicz space on cube $C_n$.
\end{remark}

\begin{remark}
\label{gammEqualTo1}
This result is false if $\gamma=1$ even for band limited $f$. The counterexample is in Section \ref{counterex}.
\end{remark}

\medskip

However, the Poincar\'e inequality in $L^1(\{-1, 1\}^n)$ from the  subsection \ref{PoincareL1band} below seems to have an independent interest and it looks slightly unusual.

\medskip

But first we need a known result on hypercontractivity.

\subsection{Hypercontractivity helps}
\label{hyper}

\begin{theorem}
\label{locDelta_th}
Let $f$ be Fourier localized to finite number of (say $k$) modes with $\bE f =0$. 
Then 
\begin{align*}
\| e^{t\Delta} f \|_1 \le e^{-\frac 12 t} \|f\|_1, \quad t \ge 3k \log 3.
\end{align*}
\end{theorem}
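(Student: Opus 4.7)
The plan is to sandwich $\|e^{t\Delta}f\|_1$ between $L^2$ quantities: majorize it by $\|e^{t\Delta}f\|_2$ on the probability space, apply the elementary $L^2$ spectral gap to gain a factor $e^{-t}$, and then use the band-limited hypothesis to pay a dimension-free price in order to pass from $\|f\|_2$ back to $\|f\|_1$.

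The first two steps are essentially free. Since $\mu$ is a probability measure, $\|e^{t\Delta}f\|_1\le\|e^{t\Delta}f\|_2$; and because $\bE f=0$, every active Fourier coefficient $\hat f(S)$ of $f$ has $|S|\ge 1$, so the $L^2$ isometry gives
\begin{equation*}
\|e^{t\Delta}f\|_2^2=\sum_{|S|\ge 1}e^{-2t|S|}|\hat f(S)|^2\le e^{-2t}\|f\|_2^2.
\end{equation*}

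The only nontrivial ingredient is the reverse comparison $\|f\|_2\le 3^k\|f\|_1$, valid for every $f$ whose Fourier support is contained in $\{S:|S|\le k\}$. I would derive it from the standard polynomial form of Bonami--Beckner hypercontractivity, $\|f\|_4\le 3^{k/2}\|f\|_2$ for degree-$k$ functions on the cube, combined with the three-term H\"older interpolation $\|f\|_2\le\|f\|_1^{1/3}\|f\|_4^{2/3}$. Rearranging yields $\|f\|_2\le 3^k\|f\|_1$ with a constant depending only on $k$, and crucially not on $n$. Chaining the three estimates,
\begin{equation*}
\|e^{t\Delta}f\|_1\le\|e^{t\Delta}f\|_2\le e^{-t}\|f\|_2\le 3^ke^{-t}\|f\|_1=e^{k\log 3-t}\|f\|_1,
\end{equation*}
and the threshold $t\ge 3k\log 3$ in the statement is exactly what pushes the exponent below $-t/2$: in that regime $k\log 3-t\le-\tfrac{2t}{3}\le-\tfrac{t}{2}$.

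The only genuine obstacle is the third step, the dimension-free bound $\|f\|_2\le 3^k\|f\|_1$ for band-limited $f$; this is where hypercontractivity (and hence the band-limited hypothesis) is truly essential, whereas everything else is mechanical. The waiting time $t\gtrsim k\log 3$ is intrinsic to this approach, since it is precisely what the exponential $L^2$ spectral decay needs in order to absorb the $3^k$ blowup incurred in returning from $L^2$ to $L^1$, which also explains why no estimate of this type can hold uniformly in $k$ (and thus for general $f$).
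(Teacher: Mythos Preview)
Your proof is correct and follows essentially the same approach as the paper: pass from $L^1$ to $L^2$, use the $L^2$ spectral gap to gain $e^{-t}$, and then invoke hypercontractivity (Bonami) together with the H\"older interpolation $\|f\|_2\le\|f\|_1^{1/3}\|f\|_4^{2/3}$ to return to $L^1$ with a dimension-free loss. The only cosmetic difference is that the paper records $\|f\|_4\le 3^{3k/2}\|f\|_1$ and then uses $\|f\|_2\le\|f\|_4$, whereas you extract the slightly sharper $\|f\|_2\le 3^{k}\|f\|_1$ directly; both constants are absorbed by the threshold $t\ge 3k\log 3$.
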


\begin{proof}
This follows easily from Theorem 9.22 of \cite{OD}. We will repeat the reasoning for the sake of convenience of the reader. By using the Bonami lemma (see pp. 247 of \cite{OD}), we have
\begin{align*}
\|f \|_4 \le 3^{\frac k2} \|f\|_2 \le 3^{\frac k2} \|f \|_1^{\frac 13} \|f \|_4^{\frac 23}.
\end{align*}
This implies $\|f\|_4 \le 3^{\frac 32 k} \|f\|_1$. Thus
\begin{align*}
\| e^{t\Delta} f \|_1 \le \| e^{t\Delta} f\|_2 \le e^{-t} \|f\|_2
\le e^{-t} \|f\|_4 \le e^{-t} 3^{\frac 32 k} \| f\|_1.
\end{align*}
\end{proof}

\begin{remark}
\label{p1alpha1tsmall}
The inequality
\begin{equation}
\label{locDelta_small}
\|e^{t\Delta} f \|_1 \le e^{-ct} \|f\|_1
\end{equation}
is not  true for small $t$ even for band limited $f$. The counterexample in subsection \ref{conterexSmall_t} shows that.
\end{remark}

\subsection{Poincar\'e inequality with $\Delta_\gamma$ in $L^1$. The first proof}
\label{PoincareL1band}

Recall that $\Delta_\gamma= -(-\Delta)^\gamma$.

\begin{theorem}
\label{Poincare_L1_thm}
For every $\gamma\in (0,1)$ there exits $b_\gamma>0$ independent of $n$ such that for every $n$ and every $f\in L^1(\{-1, 1\}^n)$ with finite number of Fourier $k$ modes  (i.e. localized
to $1$-mode, $\cdots$, $k$-mode) and $\bE f=0$, we have
\begin{equation}
\label{Poincare_L1}
b_\gamma \cdot \alpha_k \|f\|_1 \le \bE [(-\Delta_\gamma f)\cdot \sign f
\cdot {\bf 1}_{f\ne 0}] -\bE [ |\Delta_\gamma f|\cdot {\bf 1}_{f=0}]\,,
\end{equation}
where $\alpha_k= k^{-\gamma}\cdot 3^{-3k}$.
\end{theorem}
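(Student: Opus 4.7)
The plan is to lower-bound the right-hand side of (\ref{Poincare_L1}) via the Balakrishnan subordination formula
\begin{align*}
(-\Delta)^{\gamma} = \frac{\gamma}{\Gamma(1-\gamma)} \int_0^{\infty} (I - e^{t\Delta})\, t^{-1-\gamma}\, dt,
\end{align*}
combined with the hypercontractive $L^1$-decay of the heat semigroup on band-limited functions already supplied by Theorem~\ref{locDelta_th}. The key preliminary move is to introduce
\begin{align*}
\psi := \sign(f)\, \mathbf{1}_{f\ne 0} + \sign(\Delta_{\gamma} f)\, \mathbf{1}_{f=0},
\end{align*}
which satisfies $\|\psi\|_{\infty} \le 1$ and, by a direct expansion using $\Delta_{\gamma} f \cdot \sign(\Delta_{\gamma} f) = |\Delta_{\gamma} f|$, makes $\bE[(-\Delta_{\gamma} f)\, \psi]$ coincide exactly with the right-hand side of (\ref{Poincare_L1}).

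With this reformulation in place, the subordination identity yields
\begin{align*}
\bE[(-\Delta_{\gamma} f)\, \psi] = C_{\gamma} \int_0^{\infty} \bE[(f - e^{t\Delta} f)\, \psi]\, t^{-1-\gamma}\, dt,
\end{align*}
with $C_{\gamma} := \gamma/\Gamma(1-\gamma)$. For each $t>0$ the integrand admits a clean lower bound: $\bE[f\, \psi] = \|f\|_1$ (the $\{f=0\}$ contribution vanishes), while $\bE[(e^{t\Delta}f)\, \psi] \le \|e^{t\Delta}f\|_1$ since $\|\psi\|_{\infty} \le 1$. As $e^{t\Delta}$ is a Markov $L^1$-contraction, the integrand is non-negative for every $t$ and bounded below by $\|f\|_1 - \|e^{t\Delta} f\|_1$.

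To harvest the quantitative estimate, set $T := 3k \log 3$ and apply Theorem~\ref{locDelta_th}: $\|e^{t\Delta} f\|_1 \le e^{-t/2} \|f\|_1$ on $[T, \infty)$, and at the threshold $\|e^{T\Delta}f\|_1 \le 3^{-3k/2}\|f\|_1 \le \tfrac12 \|f\|_1$ for every $k \ge 1$. Hence the integrand is at least $\tfrac12 \|f\|_1$ throughout $[T, \infty)$, and restricting integration to this range gives
\begin{align*}
\bE[(-\Delta_{\gamma} f)\, \psi] \ge \frac{C_{\gamma}}{2}\, \|f\|_1 \int_T^{\infty} t^{-1-\gamma}\, dt = \frac{C_{\gamma}}{2\gamma\, (3\log 3)^{\gamma}}\, k^{-\gamma}\, \|f\|_1,
\end{align*}
which is (\ref{Poincare_L1}) with a dimension-free constant.

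The principal delicacy is the choice of the subdifferential representative $\psi$: one needs $\bE[f\psi] = \|f\|_1$ and $\|\psi\|_{\infty} \le 1$ simultaneously so as to absorb the awkward $\mathbf{1}_{f=0}$ correction on the right-hand side into a single pairing against $-\Delta_{\gamma} f$. Once this is set up, subordination plus the hypercontractive heat decay complete the argument. I remark that this route in fact produces the stronger $\alpha_k \sim k^{-\gamma}$; the additional $3^{-3k}$ factor in the stated $\alpha_k$ can simply be absorbed as slack during the $t$-integration.
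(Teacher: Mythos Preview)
Your proof is correct and follows essentially the same route as the paper: both use the subordination identity $-\Delta_\gamma f = C_\gamma^{-1}\int_0^\infty (f-e^{t\Delta}f)\,t^{-1-\gamma}\,dt$, reduce the right-hand side of \eqref{Poincare_L1} to $\int_0^\infty(\|f\|_1-\|e^{t\Delta}f\|_1)\,t^{-1-\gamma}\,dt$, and then invoke Theorem~\ref{locDelta_th} on the tail $t\gtrsim k$. Your introduction of $\psi=\sign(f)\,\mathbf 1_{f\ne 0}+\sign(\Delta_\gamma f)\,\mathbf 1_{f=0}$ is a tidy way to package the two terms into a single dual pairing, but it yields exactly the same integrand bound the paper obtains by splitting and estimating the two pieces separately; your closing remark that the argument actually delivers $\alpha_k\sim k^{-\gamma}$ rather than $k^{-\gamma}3^{-3k}$ is correct and applies equally to the paper's own computation.
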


\begin{proof}
Let $\gamma\in (0,1)$, put 
$$
C_\gamma:= \int_0^\infty (1-e^{-u})\frac{du}{u^{1+\gamma}} <\infty\,.
$$
It is then obvious that for any function $f$ on the cube such that $\bE f=0$ one has
\begin{align*}
-\Delta_\gamma f = C_\gamma^{-1} \int_0^\infty \big( f - e^{t\Delta} f\big)  \frac{dt}{t^{1+\gamma}}.
\end{align*}
Note that here and below convergence is not an issue since we are on the Hamming cube.

Now clearly
\begin{align*}
&C_{\gamma} \Bigl( \bE [(-\Delta_\gamma f)\cdot \sign f
\cdot {\bf 1}_{f\ne 0}] -\bE [ |\Delta_\gamma f|\cdot {\bf 1}_{f=0}]  \Bigr)\notag \\
=&\bE (  \int_0^\infty \big(| f| - \operatorname{sgn}(f)
\cdot {\bf 1}_{f\ne 0} e^{t\Delta} f\big)  \frac{dt}{t^{1+\gamma}} )
-\bE(  |\int_0^\infty \big(  e^{t\Delta} f\big)  \frac{dt}{t^{1+\gamma}}| \cdot {\bf 1}_{f=0}) \notag \\
\ge & \int_0^{\infty} (\|f \|_1 - \|e^{t\Delta} f\|_1)\cdot \frac {dt}{t^{1+\gamma}}.
\end{align*}
Since $\| e^{t\Delta} f\|_1 \le \|f\|_1$ for each $t\ge 0$. We can restrict the integral to
$3k\log 3 \le t \le 6k \log 3$ and then apply  Thorem \ref{locDelta_th}.

\end{proof}

\subsection{The first proof of Theorem \ref{spgap_L1_thm} via Poincar\'e inequality in $L^1$}
\label{FirstPr}

Denote
$$
I(t) = \bE \big| e^{t\Delta_\gamma} f|\,.
$$
We want to estimate $\frac{d}{dt} I(t)$ for a test function $f$.
Let $F:=F_t:= e^{t\Delta_\gamma} f$.  Then for $\eps>0$, we have
\begin{equation}
\label{eps}
|e^{-\eps (-\Delta_\gamma) } F| - |F|=\begin{cases}
 \eps \sign F \cdot (\Delta_\gamma F)+ O(\eps^2),\,\, \text{if}\,\, F(x)\neq 0;\\
\eps |\Delta_\gamma F| + O(\eps^2),\,\, \text{if}\,\, F(x)= 0\,.
\end{cases}
\end{equation}
One should keep in mind that we are on the discrete Hamming cube and as such interchanging
integrals with differentiation should not be an issue.
Now if we look at   $\frac{d}{dt} I(t)$ as the expression
$$
\frac{d}{dt} I(t) := \lim_{\eps\to 0} \frac{I(t+\eps) -I(t)}{\eps},
$$
we notice that the limit exists and  that we can go to the limit under the sign of $\bE$.
So we get from \eqref{eps} that
$$
\frac{d}{dt} I(t) = \bE \big( \sign F_t\cdot (-\Delta_\gamma F_t) \cdot {\bf 1}_{F_t\neq 0}\big) - \bE\big( |\Delta_\gamma F_t|\cdot {\bf 1}_{F_t=0}\big) \le - \tilde b_\gamma \bE |F_t| \,.
$$
The last inequality follows from Theorem \ref{Poincare_L1_thm}. Hence,
$$
 \frac{d}{dt} I(t) \le - \tilde b_\gamma I(t),\,\, I(0)= \|f\|_1\,.
 $$
 Therefore,  \eqref{spgap_L1} is proved for test functions $f$ with universal constant, and so Theorem \ref{spgap_L1_thm} is proved.

\section{The second proof of Theorem \ref{spgap_L1_thm} via the modification of the kernel of $e^{t\Delta_\gamma}$} 
\label{SecondPr}

We begin with a well-known fact connected with the subordination of fractional heat operators. We need
some sharp asymptotics  which will play some role in the perturbation argument later. For the sake
of completeness we include the proof (even for some well-known facts).

\begin{lemma} \label{lem_subor_1}
Let $0<\gamma<1$. Then
\begin{align*}
e^{-\lambda^{\gamma}}
= \int_0^{\infty} e^{-\lambda \tau} p_{\gamma}(\tau) d\tau, \quad \lambda\ge 0,
\end{align*}
where $p_{\gamma}$ is a probability density function on $\mathbb R$ satisfying:
\begin{itemize}
\item $p_\gamma $ is infinitely differentiable with bounded derivatives of all orders,  and $p_{\gamma}(\tau)=0$ for any $\tau \le 0$.
\item $\lim_{\tau \to \infty} \tau^{1+\gamma} p_{\gamma}(\tau)= C_{\gamma}$,
where
\begin{align*}
\frac 1 {C_{\gamma}} = \int_0^{\infty} \frac {1-e^{-\tau}} {\tau^{1+\gamma}} d\tau.
\end{align*}
\end{itemize}
\end{lemma}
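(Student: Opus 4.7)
The plan is to construct $p_\gamma$ in three stages: existence and positivity via Bernstein's theorem, smoothness and an explicit formula via contour integration, and the sharp tail constant via a rescaling argument.

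First, I would verify that $\lambda\mapsto e^{-\lambda^\gamma}$ is completely monotone on $[0,\infty)$. Indeed $\lambda^\gamma$ is a Bernstein function (its derivative $\gamma\lambda^{\gamma-1}$ is completely monotone and $\lambda^\gamma\ge 0$), so $e^{-\lambda^\gamma}$ is the composition of the completely monotone $e^{-t}$ with a Bernstein function. Bernstein's theorem then provides a unique positive Borel measure $\mu_\gamma$ on $[0,\infty)$ with $\int_0^\infty e^{-\lambda\tau}\,d\mu_\gamma(\tau)=e^{-\lambda^\gamma}$; evaluating at $\lambda=0$ shows $\mu_\gamma$ is a probability measure. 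Equivalently, one may cite the Lévy--Khintchine representation $\lambda^\gamma=\frac{\gamma}{\Gamma(1-\gamma)}\int_0^\infty(1-e^{-\lambda\tau})\tau^{-1-\gamma}\,d\tau$ and recognize $\mu_\gamma$ as the law of the one-sided $\gamma$-stable subordinator at time $1$.

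Second, to extract a smooth density supported in $[0,\infty)$, I would apply Mellin--Bromwich inversion. The function $e^{-\lambda^\gamma}$ is holomorphic on $\mathbb{C}\setminus(-\infty,0]$ with good decay along rays of argument less than $\pi/(2\gamma)$; deforming the vertical Bromwich line to a Hankel contour around the negative real axis yields the explicit representation
\begin{equation*}
p_\gamma(\tau)=\frac{1}{\pi}\int_0^\infty e^{-r\tau}\,e^{-r^\gamma\cos(\pi\gamma)}\sin\bigl(r^\gamma\sin(\pi\gamma)\bigr)\,dr,\qquad \tau>0,
\end{equation*}
and $p_\gamma(\tau)=0$ for $\tau\le 0$. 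Differentiation under the integral sign is legitimate for each $\tau>0$ since each extra factor of $r^k$ is tamed by $e^{-r\tau}$, while the factor $\sin(r^\gamma\sin(\pi\gamma))=O(r^\gamma)$ as $r\to 0$ absorbs the singularity at the origin of $r$. Standard estimates then show $p_\gamma\in C^\infty(\mathbb{R})$ with uniformly bounded derivatives on $\mathbb{R}$ (the flat vanishing at $\tau=0^+$ handles the only region where boundedness is not automatic).

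Third, for the tail constant I would rescale $r=u/\tau$ in the explicit formula, giving
\begin{equation*}
\tau^{1+\gamma}p_\gamma(\tau)=\frac{\tau^\gamma}{\pi}\int_0^\infty e^{-u}\,e^{-(u/\tau)^\gamma\cos(\pi\gamma)}\sin\bigl((u/\tau)^\gamma\sin(\pi\gamma)\bigr)\,du.
\end{equation*}
As $\tau\to\infty$, the middle exponential tends to $1$ and $\sin((u/\tau)^\gamma\sin(\pi\gamma))\sim(u/\tau)^\gamma\sin(\pi\gamma)$; dominated convergence (using $|\sin x|\le|x|$ for the majorant) yields
\begin{equation*}
\lim_{\tau\to\infty}\tau^{1+\gamma}p_\gamma(\tau)=\frac{\sin(\pi\gamma)}{\pi}\int_0^\infty u^\gamma e^{-u}\,du=\frac{\sin(\pi\gamma)\,\Gamma(1+\gamma)}{\pi}=\frac{\gamma}{\Gamma(1-\gamma)},
\end{equation*}
where the final step uses Euler's reflection $\Gamma(\gamma)\Gamma(1-\gamma)=\pi/\sin(\pi\gamma)$. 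To match the normalization in the statement, a one-line integration by parts in $1/C_\gamma=\int_0^\infty(1-e^{-u})u^{-1-\gamma}\,du$ gives $1/C_\gamma=\Gamma(1-\gamma)/\gamma$, confirming $\lim_{\tau\to\infty}\tau^{1+\gamma}p_\gamma(\tau)=C_\gamma$.

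The main obstacle is the analytic bookkeeping in the second stage: justifying the contour deformation requires uniform control of $e^{-\lambda^\gamma}$ on the connecting arcs, where $\operatorname{Re}\lambda^\gamma=|\lambda|^\gamma\cos(\gamma\arg\lambda)$ can become small as $\arg\lambda\to\pm\pi$, and one must then identify the resulting Hankel integral with the abstract density of $\mu_\gamma$ (uniqueness of Laplace transforms does this for free, but the deformation itself must be earned before the clean rescaling of stage three becomes available).
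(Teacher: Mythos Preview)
Your argument is correct and close in spirit to the paper's, but two choices genuinely diverge. First, the paper pushes the Bromwich line only to the imaginary axis, obtaining the oscillatory representation
\[
p_\gamma(\tau)=\frac 1\pi\int_0^\infty e^{-y^\gamma\cos(\gamma\pi/2)}\cos\bigl(\tau y-y^\gamma\sin(\gamma\pi/2)\bigr)\,dy,
\]
whereas you push all the way to a Hankel contour around the cut, obtaining the Pollard--Laplace formula with the factor $e^{-r\tau}$. Your formula makes the tail asymptotic a one-line substitution $r=u/\tau$ plus dominated convergence; the paper, lacking the $e^{-r\tau}$ factor, must first integrate by parts and deform the contour again merely to show the limit exists. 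Conversely, the paper's integrand carries the uniformly decaying weight $e^{-y^\gamma\cos(\gamma\pi/2)}$ (positive for all $0<\gamma<1$), so boundedness of $p_\gamma$ and all its derivatives on $[0,\infty)$ is immediate; in your formula the weight $e^{-r^\gamma\cos(\pi\gamma)}$ grows for $\gamma>1/2$, so the ``flat vanishing at $\tau=0^+$'' you invoke is doing real work and deserves a line of justification (Watson's lemma on $e^{-\lambda^\gamma}$ suffices).

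Second, to identify the constant the paper never evaluates a Gamma integral: it rescales the Laplace identity itself, writing $R^\gamma(1-e^{-R^{-\gamma}})=\int_0^\infty \frac{1-e^{-\tau}}{\tau^{1+\gamma}}\,p_\gamma(\tau R)(\tau R)^{1+\gamma}\,d\tau$ and sending $R\to\infty$, which reads off $C_\gamma$ directly in the form stated. Your route via $\Gamma(1+\gamma)\sin(\pi\gamma)/\pi$ and the reflection formula is of course equivalent and perhaps more explicit; the paper's trick is slicker but less informative.
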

\begin{remark}
For $\gamma=1/2$ it is well-known that $p_{\frac 12}$ admits
an explicit representation. One can just observe
\begin{align*}
e^{-|x|} &= \frac 1 {\pi} \int_{\mathbb R}  \frac 1 {1+\xi^2} e^{i \xi \cdot x } d\xi \notag \\
&=\frac 1 {\pi} \int_{\mathbb R} \int_0^{\infty}
e^{- t(1+\xi^2)} dt e^{i \xi \cdot x } d\xi \notag \\
& = \frac 1 {\sqrt {\pi}} \int_0^{\infty}
t^{-\frac 12} e^{- \frac {x^2} {4t} } e^{-t}dt= \frac 1 {2\sqrt{\pi}}
\int_0^{\infty} e^{-\tau x^2} \tau^{-\frac 32} e^{-\frac 1 {4\tau}}d\tau.
\end{align*} 
This is essentially the same formula as in \cite{ChaLem} after a change of variable.
\end{remark}
\begin{proof}
For simplicity we shall write $p_{\gamma}$ as $p$. We first show its existence.
For any $z= r e^{i\theta}$ with $\theta \in [-\frac {\pi}2, \frac {\pi}2]$, we fix the branch
of $z^{\gamma}$ such that $z^{\gamma}=r^{\gamma} e^{i\gamma \theta}$. Define
for $x>0$:
\begin{align*}
p(\tau)= \frac 1 {2\pi i} \int_{x-i\infty}^{x+i\infty} e^{-z^{\gamma}} e^{\tau z} dz.
\end{align*}
By deforming the contour it is easy to check that the integral is independent of $x$. By using  a large
semi-circle to the right one can verify that $p(\tau)$ vanishes for $\tau <0$. Furthermore 
one can take the limit $x \to 0+$ to obtain
\begin{align} \label{eq_ptau1}
p(\tau)= \frac 1 {\pi}
\int_0^{\infty} e^{-y^{\gamma} \cos(\frac {\gamma \pi} 2)}
\cos ( \tau y - y^{\gamma } \sin (\frac {\gamma \pi} 2) ) dy.
\end{align}
From this it is evident that $p$ has bounded derivatives of all orders. For $z \in \{x+iy: \, x>0, \, y\in \mathbb R\}$,
we have
\begin{align*}
e^{- z^{\gamma}} = \int_0^{\infty} e^{- \tau z} p(\tau) d\tau.
\end{align*}
In particular this identity holds for any $z=\lambda>0$. Now to show $p \ge 0$ one can just appeal
to the Bernstein theory.  More directly one can just use the fact  that
\begin{align*}
\lim_{\lambda \to \infty} e^{-\lambda \tau} \sum_{k \le \lambda x} 
\frac { (\lambda \tau)^k } {k!} =\begin{cases}
1, \quad \text{if $0\le \tau \le x$;} \\
0, \quad \text{otherwise.}
\end{cases}
\end{align*}
Since $e^{-\lambda^{\gamma}} $ is completely monotone, one can then deduce 
\begin{align*}
\int_{x_1}^{x_2} p(\tau) d\tau \ge 0, \quad \text{for any $0\le x_1<x_2<\infty$}.
\end{align*}
This then yields $p\ge 0$.  Finally to show the asymptotic of $p$, we use \eqref{eq_ptau1} and
partial integration to write
\begin{align*}
\pi \tau p(\tau)
=\gamma \operatorname{Re}
\Bigl [
\frac {z_0} i 
\int_0^{\infty} e^{- y^{\gamma} z_0} e^{i\tau y} y^{\gamma-1} dy \Bigr],
\end{align*}
where $z_0= e^{i\gamma\pi/2}$. By a further change of variable, we get
\begin{align*}
\pi \tau^{1+\gamma} p(\tau)
= \gamma
\operatorname{Re}
\Bigl[
\frac {z_0}i
\int_0^{\infty} e^{-\frac{y^{\gamma} z_0} {\tau}}
e^{iy} y^{\gamma-1} dy\Bigr].
\end{align*}
Now one can deform the contour integral inside the square bracket slightly to a straight line
making a very small angle with the positive real axis. This then easily yields the existence of
the limit as $\tau \to \infty$. To calibrate this constant, one can use the simple relation
\begin{align*}
1-e^{-R^{-\gamma}} = \int_0^{\infty} (1-e^{-\frac {\tau}R} ) p(\tau) d\tau
=\int_0^{\infty} (1- e^{-\tau}) Rp(\tau R) d\tau.
\end{align*}
This gives
\begin{align*}
R^{\gamma} (1- e^{-R^{-\gamma}})
= \int_0^{\infty}  \frac {1-e^{-\tau} } {\tau^{1+\gamma}}
p(\tau R)  \cdot (\tau R)^{1+\gamma} d\tau.
\end{align*}
Sending $R\to\infty$ then yields the constant.
\end{proof}

Our next lemma is the heart of the matter. It shows that a carefully chosen 
perturbation of the fractional heat kernel can leave invariant the
``band-limited" portion and decrease the $L^1$ operator norm.
Compared with the continuous setting in \cite{Dong1} the discrete Hamming
cube case requires a new and nontrivial twist.

\begin{lemma} \label{lem5.3a}
Let $0<\gamma<1$. There exists $t_0=t_0(\gamma) \in (0,1)$ such that
the following hold. Consider the kernel $K_t^{\gamma}$ corresponding to
$e^{t\Delta_{\gamma}}$:
\begin{align*}
K_t^{\gamma}(x) = \sum_{S\subset [n]} e^{-t|S|^{\gamma}} x^S.
\end{align*}
there exists a modification of $K_t^{\gamma}$ which we denoted as 
$\tilde K_t^{\gamma}$, such that:
\begin{itemize}
\item $\tilde K_t^{\gamma}$ is still non-negative, and 
$\| \tilde K_t^{\gamma} (\cdot )\|_{L_x^1} \le e^{-c_0 t}$, for all $0<t\le t_0$, where $c_0>0$
depends only on $\gamma$;
\item $\widehat{\tilde K_t^{\gamma} } (S)= 
\widehat{K_t^{\gamma}}(S)$, for any $S\subset [n]$ with $1\le |S|\le 2$.
\end{itemize}
\end{lemma}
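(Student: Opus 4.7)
The plan is to construct $\tilde K_t^\gamma$ as a non-negative combination of just two classical (non-fractional) heat kernels on the Hamming cube. Recall that the classical heat kernel has the closed form $K_s^{(1)}(x) = \prod_{j=1}^n (1 + e^{-s} x_j)$, which is pointwise non-negative for every $s \ge 0$ and has Fourier coefficients $\widehat{K_s^{(1)}}(S) = e^{-s|S|}$. I look for
\[
\tilde K_t^\gamma(x) = \alpha(t)\, K_{s_1(t)}^{(1)}(x) + \beta(t)\, K_0^{(1)}(x),
\]
with $\alpha, \beta \ge 0$ and $s_1 \ge 0$. The boundary kernel $K_0^{(1)}(x) = \prod_j (1+x_j)$ equals $2^n$ at $(1,\dots,1)$ and zero elsewhere; it is a bona fide non-negative function on the discrete cube, even though in the continuous analogue of \cite{Dong1} its counterpart would be a singular Dirac mass. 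Using this boundary kernel is presumably the ``nontrivial twist'' alluded to in the discussion preceding the lemma.

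Setting $u := e^{-s_1} \in [0,1]$, so that $\widehat{\tilde K_t^\gamma}(S) = \alpha u^{|S|} + \beta$, the requirements $\widehat{\tilde K_t^\gamma}(S) = e^{-t|S|^\gamma}$ at $|S| = 1, 2$ together with the identity $\|\tilde K_t^\gamma\|_1 = \alpha + \beta$ (valid because $\tilde K_t^\gamma \ge 0$) reduce the entire construction to the three scalar equations
\[
\alpha + \beta = e^{-c_0 t}, \qquad \alpha u + \beta = e^{-t}, \qquad \alpha u^2 + \beta = e^{-t\, 2^\gamma}.
\]
Solving this system explicitly yields
\[
u = \frac{e^{-t} - e^{-t\, 2^\gamma}}{e^{-c_0 t} - e^{-t}}, \qquad \alpha = \frac{e^{-c_0 t} - e^{-t}}{1 - u}, \qquad \beta = e^{-c_0 t} - \alpha.
\]
Once $\alpha, \beta, u$ have the correct signs, non-negativity of $\tilde K_t^\gamma$ is immediate, the $L^1$-norm bound $e^{-c_0 t}$ is encoded in the first equation, and the prescribed matching of Fourier coefficients at $|S|=1, 2$ is precisely the second and third equations.

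The main obstacle I expect is verifying the feasibility constraints $u \in [0, 1)$ and $\alpha, \beta \ge 0$, uniformly in $n$, for all $t$ in some interval. A Taylor expansion as $t \to 0^+$ gives $u \to (2^\gamma - 1)/(1 - c_0)$, so the decisive inequality is $c_0 < 2 - 2^\gamma$; this window is nonempty exactly because $\gamma < 1$, and any $c_0$ inside it (for definiteness $c_0 := (2-2^\gamma)/2$) keeps $u$ bounded strictly below $1$, forces $\alpha = O(t)$ and $\beta = 1 + O(t)$, and hence guarantees positivity of $\alpha, \beta$ on some explicit interval $(0, t_0(\gamma)]$. Because $\alpha, \beta, u$ do not depend on $n$, dimension-freeness is automatic. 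Conceptually, the lemma has been reduced to a two-atom solution of a truncated moment problem on $[0,1]$ in the variable $u = e^{-s}$, with atoms placed at the interior point $u$ and at the boundary point $1$; it is precisely the boundary atom (i.e.\ $s = 0$) that is unavailable in the continuous setting and makes the discrete argument strictly easier.
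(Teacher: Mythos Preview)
Your construction is correct. The three scalar equations are exactly the conditions needed, and your Taylor analysis at $t\to 0^+$ is right: with $c_0 < 2-2^\gamma$ one gets $u(t)\to (2^\gamma-1)/(1-c_0)\in(0,1)$, $\alpha(t)=O(t)$, $\beta(t)=1+O(t)$, so by continuity all sign constraints hold on some interval $(0,t_0(\gamma)]$. Since $\alpha,\beta,u$ are independent of $n$, dimension-freeness is clear.

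Your route is genuinely different from the paper's. The paper keeps the full subordination representation
\[
K_t^\gamma(x)=\int_0^\infty \prod_{j=1}^n\bigl(1+e^{-\tau t^{1/\gamma}}x_j\bigr)\,p_\gamma(\tau)\,d\tau
\]
and perturbs the density $p_\gamma$ by subtracting a small rescaled bump $\kappa\, t^{(1+\gamma)/\gamma}\varphi(t^{1/\gamma}\tau)$ satisfying $\int e^{-\tau}\varphi=\int e^{-2\tau}\varphi=0$ and $\int\varphi>0$; non-negativity is retained thanks to the tail asymptotic $p_\gamma(\tau)\sim C_\gamma \tau^{-1-\gamma}$ established in the preceding lemma. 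You instead replace the continuous subordinator by a two-atom measure on $[0,1]$ in the variable $u=e^{-s}$, with one atom at the boundary $u=1$. Your argument is strictly more elementary --- it bypasses the subordination lemma and the asymptotics of $p_\gamma$ entirely --- and your diagnosis of why the boundary atom works here but not in the $\mathbb R^n$ setting of \cite{Dong1} (where $K_0^{(1)}$ would be a singular Dirac mass outside $L^1$) is apt. The paper's approach, on the other hand, extends to $k$ prescribed modes simply by imposing $k$ linear moment conditions on a single bump $\varphi$, whereas your approach would require $k{+}1$ atoms and a Vandermonde-type solvability check; both extend, but the bookkeeping differs.
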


\begin{proof}
By using Lemma \ref{lem_subor_1} we can write
\begin{align*}
K_t^{\gamma}(x)
=\int_0^{\infty} \prod_{j=1}^n (1+ e^{-\tau t^{\frac 1 {\gamma}}} x_j)
p_{\gamma}(\tau) d\tau.
\end{align*}
By Lemma \ref{lem_subor_1}, we may choose $R_0=R_0(\gamma)>10$ sufficiently large such that
\begin{align}
\label{p_size}
p_{\gamma}(\tau) \ge \frac 12 \tau^{-(1+\gamma)} C_{\gamma}, 
\quad\forall\, \tau \ge R_0.
\end{align}
Now define $t_0=  R_0^{-\gamma}$.  
Let us assume $S=\{1,2\}$. Any other $S$ with $|S|=2$ will be treated in the same way. Moreover, as the reader will
see
any finite $|S|$ can be treated in exactly same way.
For $0<t\le t_0$, we construct the modified kernel function
as
\begin{align*}
\tilde K_t^{\gamma}(x)
= \int_0^{\infty} \prod_{j=1}^n (1+e^{-\tau t^{\frac 1 {\gamma} }} x_j)
(p_{\gamma}(\tau)- \kappa t^{\frac {1+\gamma}{\gamma}} 
\varphi(t^{\frac 1{\gamma}} \tau) ) d\tau,
\end{align*}
where $\kappa>0$ is a sufficiently small constant, and $\varphi$ is a bump function
supported in $[1,2]$ satisfying:
\begin{align*}
&\int_0^{\infty} e^{-\tau} \varphi(\tau) d\tau =0, \quad
\\
&\int_0^{\infty} e^{-2\tau} \varphi(\tau) d\tau=0, \quad \int_0^{\infty} \varphi(\tau) d\tau>0.
\end{align*}
Note that the first two equalities easy imply that $\widehat{\tilde K_t^{\gamma} } (S)= 
\widehat{K_t^{\gamma}}(S)$, for our $S=\{1,2\}$. 

\medskip

 On the other hand,
on the support of $\varphi$, we have $\tau \sim t^{-\frac1{\gamma}}$, and one can easily choose (by using \eqref{p_size})
$\kappa$ sufficiently small such that 
\begin{align*}
p_{\gamma}(\tau)- \kappa t^{\frac {1+\gamma}{\gamma}} 
\varphi(t^{\frac 1{\gamma}} \tau) \ge 0.
\end{align*}
Since $\tilde K_t^{\gamma}$ is non-negative, we clearly have
\begin{align*}
\| \tilde K_t^{\gamma} \|_{L_x^1} = \text{$0$-mode of $\tilde K_t^{\gamma}$}
=1- \kappa t \int_0^{\infty} \varphi(\tau) d\tau.
\end{align*}

\medskip

Notice that any $S$ with fixed finite $|S|$ can be treated by the same approach. For example, if $S$ were $\{2, 3, 7\}$ we would replace the previous 
requirements by the following ones:
\begin{align*}
&\int_0^{\infty} e^{-2\tau} \varphi(\tau) d\tau =0, \quad
\\
&\int_0^{\infty} e^{-3\tau} \varphi(\tau) d\tau=0, 
\\
&\int_0^{\infty} e^{-7\tau} \varphi(\tau) d\tau=0, 
\quad \int_0^{\infty} \varphi(\tau) d\tau>0.
\end{align*}
\end{proof}

\medskip

\begin{remark}
Of course $\kappa$ depends on $|S|$, and even on $S$ itself. But if one fixes the ``band"  $S$ and starts to increase the dimension $n$, 
this constant $\kappa$ will not be depending on $n$. We choose  function $\varphi$ with orthogonality conditions as above that depend on $S$ but have nothing to do with $n$.
It would be interesting to measure the dependence on $S$.
\end{remark}

We have the following general inequality for band localized functions $f$ for some universal $c_\gamma>0$ (if $0<\gamma <1$).
\begin{theorem}
\label{band_peq1}
Let $0<\gamma<1$.
Let the function $f: C_n\to \bR$ is band localised to, say, the first and the second mode only, then independent of $n$ and for all such $f$ we have
\begin{equation}
\label{sub-alpha1}
\|e^{t \Delta_{\gamma}} f\|_p \le e^{-c_\gamma t} \|f\|_p, \,\,\forall\, t\ge 0,\, 1\le p\le \infty,
\end{equation}
where $c_{\gamma}>0$ depends only on $\gamma$.
Moreover, the norm $\|\cdot\|_p, 1\le p\le \infty$ can be replaced here by the norm of any shift invariant Banach space on Hamming cube.
\end{theorem}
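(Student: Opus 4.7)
The plan is to convert Lemma \ref{lem5.3a} into a contraction estimate in an arbitrary shift-invariant norm by invoking a Young-type inequality on the group $C_n$ and then iterating in time.

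Under coordinatewise multiplication, $C_n = \{-1,1\}^n$ is an abelian group (isomorphic to $\mathbb{F}_2^n$) whose characters are exactly the monomials $x^S$. Hence any Fourier multiplier with symbol $m(S)$ is convolution against the kernel $\sum_S m(S)\, x^S$, and in particular $e^{t\Delta_\gamma} g = K_t^\gamma * g$ where $(K*g)(x) = \bE_y K(y) g(xy)$. For $f$ with Fourier support in $\{|S|\in\{1,2\}\}$, Lemma \ref{lem5.3a} gives $\widehat{\tilde K_t^\gamma}(S) = \widehat{K_t^\gamma}(S)$ on $\operatorname{supp}\hat f$, so
$$e^{t\Delta_\gamma} f \;=\; \tilde K_t^\gamma * f \qquad (0<t\le t_0).$$

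Because $\tilde K_t^\gamma \ge 0$, writing $(\tilde K_t^\gamma * f)(x) = \bE_y \tilde K_t^\gamma(y)\,(\tau_y f)(x)$ with $(\tau_y f)(x):=f(xy)$, the triangle inequality for any shift-invariant Banach norm $\|\cdot\|_X$ yields
$$\|e^{t\Delta_\gamma} f\|_X \;\le\; \bE_y \tilde K_t^\gamma(y)\, \|\tau_y f\|_X \;=\; \|\tilde K_t^\gamma\|_1\,\|f\|_X \;\le\; e^{-c_0 t}\,\|f\|_X, \qquad 0<t\le t_0.$$
Nonnegativity is essential here; for a signed kernel the right-hand side would have to be replaced by a much larger operator norm. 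This covers all $L^p$ with $1\le p\le\infty$, and likewise Lorentz and Orlicz norms, in one shot.

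To extend to all $t>0$, observe that $e^{s\Delta_\gamma} f$ inherits the band localization of $f$ for every $s\ge 0$, since $e^{s\Delta_\gamma}$ is itself a Fourier multiplier. For $t = Nt_0 + r$ with $N\in\mathbb{N}$ and $0\le r < t_0$, the semigroup identity $e^{t\Delta_\gamma} = e^{r\Delta_\gamma}\,(e^{t_0\Delta_\gamma})^N$ and the previous step applied to each factor give
$$\|e^{t\Delta_\gamma} f\|_X \;\le\; \|\tilde K_r^\gamma\|_1\,\|\tilde K_{t_0}^\gamma\|_1^{\,N}\,\|f\|_X \;\le\; e^{-c_0 Nt_0}\,\|f\|_X.$$
Since $Nt_0 \ge t/2$ for $t\ge t_0$, taking $c_\gamma = c_0/2$ delivers the claimed bound on that range, while for $t\le t_0$ the inequality $e^{-c_0 t}\le e^{-c_\gamma t}$ is automatic. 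The same argument, with the orthogonality conditions on $\varphi$ in Lemma \ref{lem5.3a} adjusted to the set of band modes in question, handles localization to any fixed finite set of modes.

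The only genuine obstacle --- constructing a nonnegative kernel that is strictly $L^1$-contractive and unchanged on the prescribed Fourier modes --- is already resolved by Lemma \ref{lem5.3a}. Once that lemma is in hand, the remaining steps above (Young's inequality plus semigroup iteration) are routine.
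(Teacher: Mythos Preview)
Your proposal is correct and follows essentially the same approach as the paper: the paper's own proof simply says it suffices to treat $0<t<t_0$ since $e^{t\Delta_\gamma}f$ remains band localised, and then invokes Lemma \ref{lem5.3a} together with Young's inequality. Your write-up spells out explicitly the convolution/shift-invariance argument behind that Young step and the semigroup iteration to pass from small $t$ to all $t$, which is exactly what the paper leaves implicit.
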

\begin{proof}
Since $e^{t \Delta_{\gamma}} f$ is still band localised, it suffices to prove the result
for $0<t<t_0$ with $t_0=t_0(\gamma)$ small.  This follows directly from Lemma
\ref{lem5.3a} and Young's inequality.
\end{proof}

\bigskip

\begin{remark}
For $p>1$ and $\gamma=1$ we have even stronger Theorem \ref{gap_pbigger1}. It is stronger because it can be formulated  as 
\begin{equation}
\label{sub-alphaeq1}
\|e^{t \Delta} f\|_p \le e^{-c_1 t} \|f\|_p, \,\, t\ge 0,\, 1< p<\infty,
\end{equation}
independently of $n$ for all functions $f$ that are very weakly spectral localized, namely, for $f$ such that only $0$-mode vanishes: $\bE f=0$.
\end{remark}
\begin{remark}
Also for $p=1, \gamma=1$ one has the estimate \eqref{sub-alpha1}---but only for large $t$, see
Theorem \ref{locDelta_th}. As to the case $p=1, \gamma=1$, $t$ is small, and $f$ is band localized, subsection \ref{conterexSmall_t} shows that such drop of norm can be false.
So this is the case when even for band localized functions we do not have ``spectral gap" type inequality.  But as soon as either 1) $p>1$ and any $\gamma\le 1$ or  2) $\gamma<1, p=1$ we have ``spectral gap" inequality
$$
\| e^{t\Delta_\gamma} f\|_p \le e^{-c t} \|f\|_p, \, c>0\,.
$$
In case 1)  we just need very weak spectral localization, namely, just $\bE f=0$. In  case 2) we used that $f$ is band localized.  This condition cannot be dropped as counterexample is subsection \ref{gamma_less1_counterex} shows.
\end{remark}

\section{Counterexamples}
\label{counterex}

\subsection{Counterexample to $\|e^{t\Delta}f\|_1 \le e^{-ct}\|f\|_1$, $\bE f=0$}
\label{counterexNotBand}

One cannot get independent of $n$ estimate of Theorem \ref{gap_pbigger1} for $p=1$.
In fact, let $f(1,\dots, 1) = 2^{n-1}$, $f(-1,\dots, -1) = -2^{n-1}$, and $f(x)=0$ for all other points $x\in \{-1,1\}^n$.
Then $\bE f=0, \|f\|_1=1$, and 
$$
e^{t\Delta} f (x) = 2^{-1}\big(\prod_{i=1}^n (1+ e^{-t} x_i) - \prod_{i=1}^n (1- e^{-t} x_i)\big)\,.
$$
Hence,
\begin{align}
\label{L1}
&\|e^{t\Delta} f\|_1 = \frac{1}{2^{n+1}}\sum_{k=0}^n\binom{n}{k}\big | (1+e^{-t})^{n-k}(1-e^{-t})^k - (1-e^{-t})^{n-k}(1+e^{-t})^k \big| =\nonumber
\\
&\frac{1}{2^{n}}\sum_{0\le k\le \frac{n}2} \binom{n}{k} \big ( (1+e^{-t})^{n-k}(1-e^{-t})^k - (1-e^{-t})^{n-k}(1+e^{-t})^k \big)\,.
\end{align}

\medskip

Now let us assume that there exists a universal constant $\kappa<1$ such that for all $n$ and all functions $f\in L^1(\{-1, 1\}^n), \bE f=0$, there exists $t_0$ such that  for
all $t\ge t_0$
\begin{equation}
\label{drop1}
\|e^{t\Delta} f\|_1 \le \kappa \|f\|_1, \,\, \text{if}\,\, \bE f=0\,.
\end{equation}

Then by semi-group property \eqref{drop1}  would imply the universal $t_1 =2 t_0 \cdot \log 2/ \log\frac1{\kappa}$ such that for all $n$ simultaneously
\begin{equation}
\label{drop2}
\|e^{t_1\Delta} f\|_1 < \frac12 \|f\|_1, \,\, \text{if}\,\, \bE f=0\,.
\end{equation}

\begin{prop}
\label{almost1}
Let $0<\epsilon\le 1/2$. Then for $n$ sufficiently large, we have
\begin{align*}
\frac 1 {2^{n}} \sum_{0\le k \le n/2}
\binom{n}{k}
\cdot (  (1+\epsilon)^{n-k} (1-\epsilon)^k - (1+\epsilon)^k (1-\epsilon)^{n-k})
\ge \frac12(1- (1-\epsilon^2)^{\frac n2}).
\end{align*}
\end{prop}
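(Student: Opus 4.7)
The plan is to reinterpret the left-hand side probabilistically and bound it below using the Bhattacharya--Hellinger affinity between two conjugate binomial distributions. Set $a_k:=(1+\epsilon)^{n-k}(1-\epsilon)^k$ and $b_k:=(1+\epsilon)^k(1-\epsilon)^{n-k}$; these satisfy $a_k=b_{n-k}$, and $a_k\ge b_k$ precisely when $k\le n/2$. Let $p_k:=\binom{n}{k}a_k/2^n$ and $q_k:=\binom{n}{k}b_k/2^n$; by the binomial theorem each sums to one, so $\{p_k\}$ and $\{q_k\}$ are the mass functions of $\mathrm{Bin}(n,\tfrac{1-\epsilon}{2})$ and $\mathrm{Bin}(n,\tfrac{1+\epsilon}{2})$ respectively.

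The first step is a symmetrization. From $q_k=p_{n-k}$ one has $\sum_k(p_k-q_k)=0$, and since $p_k-q_k$ changes sign exactly at $k=n/2$, we obtain $\sum_{k=0}^n|p_k-q_k|=2\sum_{0\le k\le n/2}(p_k-q_k)$. Consequently the expression on the left of the proposition equals $\tfrac12\sum_k|p_k-q_k|$, i.e.\ the total variation distance $d_{\mathrm{TV}}(p,q)$.

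The second step is the key computational observation: $a_k b_k=[(1+\epsilon)(1-\epsilon)]^n=(1-\epsilon^2)^n$ is independent of $k$. Hence $\sum_k\sqrt{p_k q_k}=2^{-n}(1-\epsilon^2)^{n/2}\sum_k\binom{n}{k}=(1-\epsilon^2)^{n/2}$. Combining this with the elementary pointwise inequality $|p_k-q_k|=|\sqrt{p_k}-\sqrt{q_k}|(\sqrt{p_k}+\sqrt{q_k})\ge(\sqrt{p_k}-\sqrt{q_k})^2$ and summing yields $\sum_k|p_k-q_k|\ge 2-2\sum_k\sqrt{p_k q_k}=2\bigl(1-(1-\epsilon^2)^{n/2}\bigr)$. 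Therefore the left-hand side of the proposition is at least $1-(1-\epsilon^2)^{n/2}$, already twice the required bound and valid for every $n\ge 1$.

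I expect no real obstacle: everything reduces to the trivial identity $a_k b_k=(1-\epsilon^2)^n$, which reflects that $p$ and $q$ are two conjugate binomial laws with swapped success probabilities. If one preferred instead to match the ``$n$ sufficiently large'' phrasing of the statement, a direct Chernoff/CLT argument also works, since $\mathrm{Bin}(n,\tfrac{1\pm\epsilon}{2})$ concentrate around $n(1\pm\epsilon)/2$ with fluctuations of order $\sqrt{n}$ and thus live on essentially disjoint supports once $n\gg\epsilon^{-2}$. The Hellinger route sketched above is however cleaner and dimension-free in the most transparent way.
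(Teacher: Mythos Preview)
Your argument is correct and in fact yields a sharper conclusion than the paper's own proof. You recognise the left-hand side as the total variation distance $d_{\mathrm{TV}}(p,q)$ between the two conjugate binomial laws $\mathrm{Bin}(n,\tfrac{1-\epsilon}{2})$ and $\mathrm{Bin}(n,\tfrac{1+\epsilon}{2})$, then bound it below by the squared Hellinger distance via $|p_k-q_k|\ge(\sqrt{p_k}-\sqrt{q_k})^2$; the crucial identity $a_k b_k=(1-\epsilon^2)^n$ makes the Hellinger affinity $\sum_k\sqrt{p_k q_k}=(1-\epsilon^2)^{n/2}$ exact, and you obtain $\mathrm{LHS}\ge 1-(1-\epsilon^2)^{n/2}$ for \emph{every} $n\ge 1$. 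The paper instead argues more directly: it uses that for $k\le n/2$ one has $b_k\le (1-\epsilon^2)^{n/2}\le a_k$, combines this with $\sum_{k\le n/2}p_k\ge \sum_{k>n/2}p_k$ (the binomial median lies to the left of $n/2$), and splits off the full sum $\sum_k p_k=1$; this produces only $\mathrm{LHS}\ge \tfrac12(1-(1-\epsilon^2)^{n/2})$ and carries a parity caveat for even $n$. Your route is cleaner, gives twice the constant, and removes the ``$n$ sufficiently large'' hypothesis entirely; the paper's route is more elementary in that it avoids any information-theoretic language, but is otherwise strictly weaker.
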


\begin{proof}
We have
\begin{align*}
2\cdot\operatorname{LHS}
&\ge \sum_{0\le k\le \frac n2} \frac 1 {2^n}
\binom{n}{k} \cdot (1+\epsilon)^{n-k} (1-\epsilon)^k
+\sum_{k>n/2}  \frac 1 {2^n}
\binom{n}{k} \cdot (1+\epsilon)^{n-k} (1-\epsilon)^k \notag \\
& \quad - \sum_{0\le k\le \frac n2} \frac 1 {2^{n-1}} 
\binom{n}{k} \cdot (1+\epsilon)^{\frac n2} (1-\epsilon)^{\frac n2} \notag \\
& \ge 1- (1-\epsilon^2)^{\frac n2},
\end{align*}
where in the last inequality we may assume $n$ is an odd integer so that $k=n/2$ cannot
be obtained. If $n$ is even, one can get a similar bound.
\end{proof}
Now we use \eqref{L1} and the Proposition to come to contradiction with \eqref{drop2}. Hence \eqref{drop1} is false too.

\subsection{Counterexample to $\| e^{t\Delta_{\gamma}} f \|_{L^1}
\le e^{-ct} \| f \|_{L^1}$ for $f$ with $\mathbb E f=0$}
\label{gamma_less1_counterex}

Fix $0<\gamma<1$.  Again we shall argue by contradiction. Assume the desired estimate is true. Similar to
the Laplacian case, this would imply that there exists
universal $t_1>0$ independent of $n$, such that for all $f$ with $\mathbb E f=0$, 
we have
\begin{align*}
\| e^{t_1\Delta_{\gamma} } f \|_1 \le \frac 14 \| f\|_1.
\end{align*}
Now take the same $f$ as in the Laplacian case. By using the subordination formula
\begin{align*}
e^{-t\lambda^{\gamma}} = \int_0^{\infty} e^{-\tau t^{\frac 1 {\gamma}} \lambda}
d\rho(\tau), 
\end{align*}
we get
\begin{align*}
(e^{t\Delta_{\gamma} } f )(x)
= \frac 12 \int_0^{\infty}
( \prod_{j=1}^n ( 1+e^{ -\tau t^{\frac 1{\gamma}} } x_j)
-\prod_{j=1}^n(1-e^{-\tau t^{\frac 1 {\gamma}} } x_j) ) d\rho(\tau).
\end{align*}
Hence
\begin{align*}
&\| e^{t\Delta_{\gamma}} f \|_1 \notag \\
=&\frac{1}{2^{n}}\sum_{0\le k\le \frac{n}2} \binom{n}{k} \int_0^{\infty} \big 
( (1+e^{-\ttau})^{n-k}(1-e^{-\ttau})^k - (1-e^{-\ttau})^{n-k}(1+e^{-\ttau})^k \big)
d\rho(\tau) \notag \\
\ge & \frac 12 \int_0^{\infty}
(1- (1-e^{-2\ttau})^{\frac n2} ) d\rho(\tau).
\end{align*}
Now take $t=t_1$ and send $n$ to infinity. We clearly arrive at a contradiction!

\subsection{Counterexample to $\|e^{t\Delta} f\|_1\le e^{-ct} \|f\|_1$
for band-limited $f$ with small $t$}
\label{conterexSmall_t}

Consider the Gaussian space case. Let $\rho(x)= e^{-\frac {x^2} 2}$ and consider
$f(x)=x^3= \operatorname{He}_3 (x)+ 3 \operatorname{He}_1 (x)$. 
Denote $\Delta_{\operatorname{ou}} f = f^{\prime\prime} - x f^{\prime}$.  Then one
can verify that 
\begin{align*}
\int_{f \ne 0} (-\Delta_{\operatorname{ou}} f ) \operatorname{sgn}(f)  \rho(x) dx =0.
\end{align*}
This in turn implies that
\begin{align*}
\| e^{t\Delta} f \|_1 \ge \|f\|_1 - O(t^2),
\end{align*}
for small $t$, which of course contradicts $\|e^{t\Delta} f\|_1 \le e^{-c_0t} \|f\|_1 \le (1-c_0t +O(t^2))\|f\|_1$, $c_0>0$.

\medskip


\end{document}